\providecommand{\U}[1]{\protect \rule{.1in}{.1in}}
\newtheorem{theorem}{Theorem}
\newtheorem{remark}[theorem]{Remark}
\newenvironment{proof}[1][Proof]{\noindent \textbf{#1.} }{\  \rule{0.5em}{0.5em}}
\begin{document}

\title{Extension of some Cheney-Sharma type operators to a triangle with one curved
side }
\author{Teodora C\u{a}tina\c{s} \thanks{Babe\c{s}-Bolyai University, Faculty of
Mathematics and Computer Science, Str. M. Kog\u{a}lniceanu Nr. 1, RO-400084
Cluj-Napoca, Romania, e-mail: tcatinas@math.ubbcluj.ro}}
\date{}
\maketitle

\begin{abstract}
We extend some Cheney-Sharma type operators to a triangle with one curved
side. We construct their product and Boolean sum, we study their interpolation
properties, the orders of accuracy and we give different expressions of the
corresponding remainders. We also give some illustrative examples.

\end{abstract}

\textbf{Keywords}: Cheney-Sharma operator, product and Boolean sum operators,
\newline modulus of continuity, degree of exactness, the Peano's theorem,
error evaluation.

\textbf{MSC 2000 Subject Classification}: 41A35, 41A36, 41A25, 41A80.

\section{Introduction}

In order to match all the boundary information on a curved domain (as
Dirichlet, Neumann or Robin boundary conditions for differential equation
problems), there were considered interpolation operators on domains with
curved sides (see, e.g., \cite{BarGreJAT75}, \cite{Ber89}, \cite{BlaCatCom09}%
-\cite{Cat17}, \cite{ComCat09}, \cite{ComCat10}, \cite{MarMcL74},
\cite{MarMit73}, \cite{MitMcL74}).

The aim of this paper is to construct some Cheney-Sharma type operators on a
triangle with one curved side and to study the interpolation properties, the
orders of accuracy and the remainders of the corresponding approximation formulas.

Using the interpolation properties of such operators, blending function
interpolants can be constructed, that exactly match the function on some sides
of the given region. Important applications of these blending functions are in
computer aided geometric design (see, e.g., \cite{Bar76}-\cite{BarBirGor73},
\cite{Bar76b}, \cite{Sch76}), in finite element method for differential
equations problems (see, e.g., \cite{Bar76}, \cite{GorHall72}, \cite{Gor74}, \cite{MarMit73}, \cite{MArMit78},
\cite{Zla73}) or for construction of surfaces which satisfy some given
conditions (see, e.g., \cite{CatBlaCom13}, \cite{ComGan86}, \cite{ComGanTam91}).

\section{Univariate operators}

Let $m\in \mathbb{N}$ and $\beta$ be a nonnegative parameter. The Cheney-Sharma
operators of second kind $Q_{m}:C[0,1]\rightarrow C[0,1]$, introduced in
\cite{CheSar64}, are given by
\begin{equation}
(Q_{m}f)(x)=%
{\textstyle \sum \limits_{i=0}^{m}}
{q}_{m,i}(x)f(\tfrac{k}{m}), \label{Chorig}%
\end{equation}%
\[
{q}_{m,i}(x)={\tbinom{m}{i}}\frac{x(x+i\beta)^{i-1}(1-x)[1-x+(m-i)\beta
]^{m-i-1}}{(1+m\beta)^{m-1}}.
\]
We recall some results regarding these Cheney-Sharma type operators.

\begin{remark}
\label{proprChShorig}1) Notice that for $\beta=0$, the operator $Q_{m}$
becomes the Bernstein operator.

2) In \cite{StaCis97}, there have been proved that the Cheney-Sharma operator
$Q_{m}$ interpolates a given function at the endpoints of the interval.

3) In \cite{CheSar64} and \cite{StaCis97}, there have been proved that the
Cheney-Sharma operator $Q_{m}$ reproduces the constant and the linear
functions, so its degree of exactness is $1$ (denoted $\operatorname*{dex}%
(Q_{m})=1)$.

4) In \cite{CheSar64} it is given the following result
\begin{align}
(Q_{m}e_{2})(x)=  &  x(1+m\beta)^{1-m}[S(2,m-2,x+2\beta,1-x)\label{qe2}\\
&  -(m-2)\beta S(2,m-3,x+2\beta,1-x+\beta)],\nonumber
\end{align}
where $e_{i}(x)=x^{i},$ $i\in \mathbb{N,}$ and
\begin{equation}
S(j,m,x,y)=\sum_{k=0}^{m}{\binom{m}{k}}(x+k\beta)^{k+j-1}[y+(m-k)\beta]^{m-k},
\label{S}%
\end{equation}
$j=\overline{0,m}$, $m\in \mathbb{N}$, $x,y\in \lbrack0,1]$, $\beta>0$.
\end{remark}

We consider the standard triangle $\tilde{T}_{h}$ (see Figure 1), with
vertices $V_{1}=(0,h),$ $V_{2}=(h,0)$ and $V_{3}=(0,0),$ with two straight
sides $\Gamma_{1},$ $\Gamma_{2},$ along the coordinate axes, and with the
third side $\Gamma_{3}$ (opposite to the vertex $V_{3}$) defined by the
one-to-one functions $f$ and $g,$ where $g$ is the inverse of the function
$f,$ i.e., $y=f(x)$ and $x=g(y)$, with $f(0)=g(0)=h,\ $for $h>0$. Also, we
have $f(x)\leq h$ and $g(y)\leq h,$ for $x,y\in \left[  0,h\right]  .$%

\begin{figure}
[ptb]
\begin{center}
\includegraphics[
height=1.8005in,
width=2.0712in
]%
{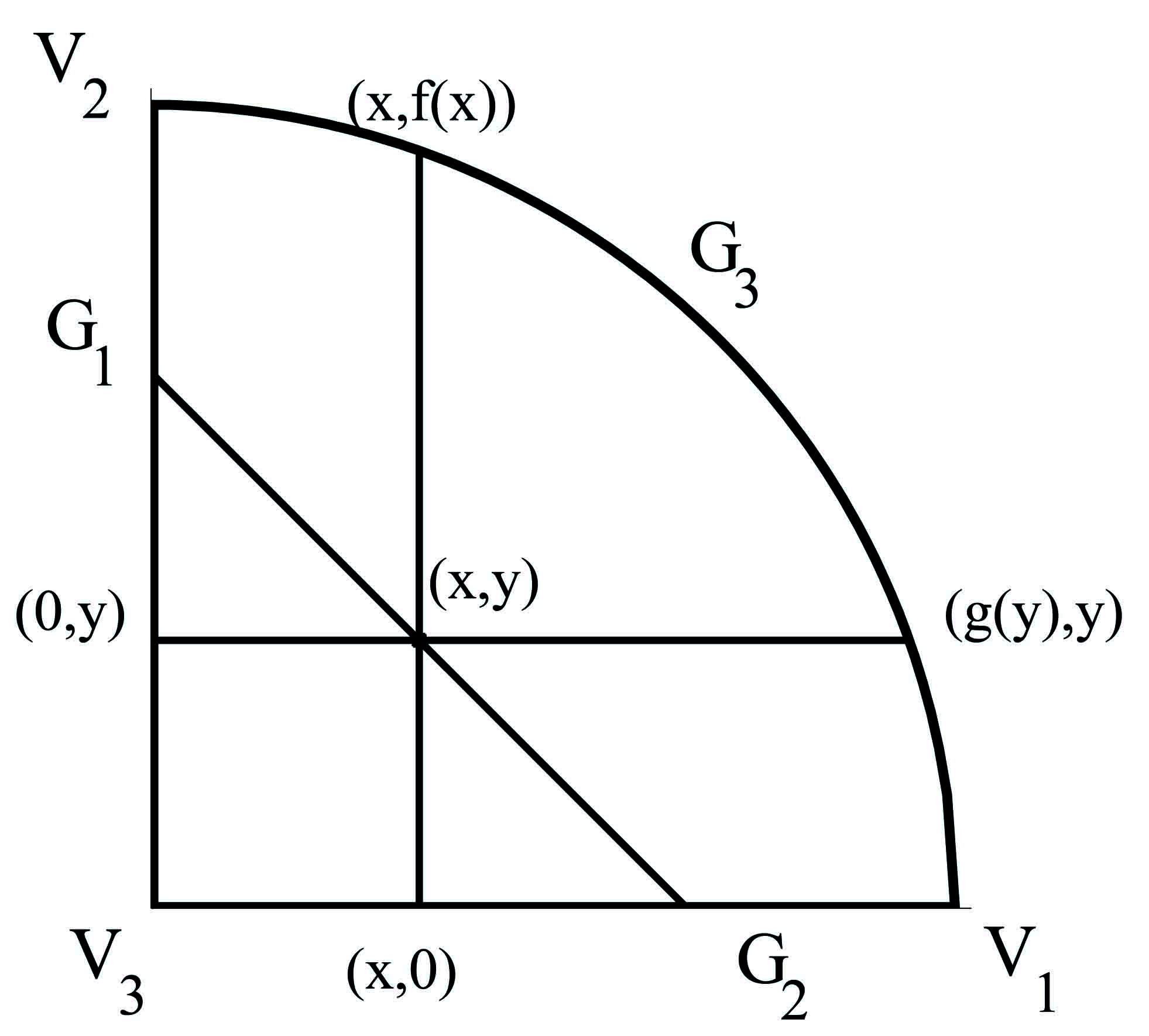}%
\caption{Triangle $\tilde{T}_{h}.$}%
\end{center}
\end{figure}

For $m,n\in \mathbb{N},$ $\beta,b\in \mathbb{R}_{+},$ we consider the following
extensions of the Cheney-Sharma operator given in (\ref{Chorig}):%
\begin{align}
(Q_{m}^{x}F)(x,y)  &  =%
{\textstyle \sum \limits_{i=0}^{m}}
{q}_{m,i}(x,y)F\left(  i\tfrac{g(y)}{m},y\right)  ,\label{Chcurb}\\
(Q_{n}^{y}F)(x,y)  &  =%
{\textstyle \sum \limits_{j=0}^{n}}
{q}_{n,j}(x,y)F\left(  x,j\tfrac{f(x)}{n}\right)  ,\nonumber
\end{align}
with
\begin{align*}
q_{m,i}\left(  x,y\right)   &  ={\tbinom{m}{i}}\tfrac{\tfrac{x}{g(y)}%
(\tfrac{x}{g(y)}+i\beta)^{i-1}(1-\tfrac{x}{g(y)})[1-\tfrac{x}{g(y)}%
+(m-i)\beta]^{m-i-1}}{(1+m\beta)^{m-1}},\\
q_{n,j}\left(  x,y\right)   &  ={\tbinom{n}{j}}\tfrac{\tfrac{y}{f(x)}%
(\tfrac{y}{f(x)}+jb)^{j-1}(1-\tfrac{y}{f(x)})[1-\tfrac{y}{f(x)}%
+(n-j)b]^{n-j-1}}{(1+nb)^{n-1}},
\end{align*}
where%
\[
\Delta_{m}^{x}=\left \{  \left.  i\tfrac{g(y)}{m}\right \vert \ i=\overline
{0,m}\right \}  \  \text{and }\Delta_{n}^{y}=\left \{  \left.  j\tfrac{f(x)}%
{n}\right \vert \ j=\overline{0,n}\right \}
\]
are uniform partitions of the intervals $[0,g(y)]$ and $[0,f(x)].$

\begin{remark}
As the Cheney-Sharma operator of second kind interpolates a given function at
the endpoints of the interval, we may use the operators $Q_{m}^{x}$ and
$Q_{n}^{y}$ as interpolation operators.
\end{remark}

\begin{theorem}
\label{proprQ}If $F$ is a real-valued function defined on $\widetilde{T}%
_{h}\ $then
\end{theorem}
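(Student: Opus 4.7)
The plan is to establish the interpolation and exactness properties of $Q_m^x$ and $Q_n^y$ by reducing every computation to the univariate Cheney--Sharma operator acting in the normalised variable $t = x/g(y)$ (or $s = y/f(x)$), then invoking the facts collected in Remark~\ref{proprChShorig}.

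First I would verify the interpolation on the boundary. For $Q_m^x$, when $x=0$ every basis function $q_{m,i}(x,y)$ with $i\geq 1$ carries the factor $x/g(y)$ and thus vanishes, while the $i=0$ term reduces to $(1-x/g(y))[1-x/g(y)+m\beta]^{m-1}/(1+m\beta)^{m-1}$, which equals $1$ at $x=0$. Hence $(Q_m^x F)(0,y)=F(0,y)$, giving interpolation on the side $\Gamma_1$. When $x=g(y)$, every term with $i\leq m-1$ carries the factor $(1-x/g(y))$ and vanishes, while the $i=m$ term reduces to $(x/g(y))(x/g(y)+m\beta)^{m-1}/(1+m\beta)^{m-1}$, which equals $1$ at $x=g(y)$. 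Thus $(Q_m^x F)(g(y),y)=F(g(y),y)$, i.e.\ interpolation on the curved side $\Gamma_3$. The analogous statements for $Q_n^y$ follow by symmetry, exchanging the roles of $x,y$ and of $g,f$.

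Next I would record the degree of exactness. Setting $t=x/g(y)$, the basis functions $q_{m,i}(x,y)$ coincide with the univariate Cheney--Sharma basis $q_{m,i}(t)$ from \eqref{Chorig}, and the nodes $ig(y)/m$ become $t_i g(y)$ with $t_i=i/m$. Consequently, for $F(x,y)=\varphi(y)$, $F(x,y)=x\,\psi(y)$ or $F(x,y)=\psi(y)$, the sum $\sum_i q_{m,i}(x,y)F(ig(y)/m,y)$ reduces to $(Q_m \tilde f)(t)$ applied to the function $\tilde f(t)=F(t\,g(y),y)$, which is constant or linear in $t$; by Remark~\ref{proprChShorig} part~3, $Q_m$ reproduces $e_0$ and $e_1$, so $Q_m^x F = F$ in these cases. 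This should yield $\operatorname{dex}(Q_m^x)\geq 1$ with respect to the $x$-variable, and likewise for $Q_n^y$.

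The routine but delicate step is checking that the factors of $(1-x/g(y))$ and $x/g(y)$ carry through the basis identifications correctly at the endpoints $t=0,1$, since the binomial formula for $q_{m,i}$ at these points must telescope to $1$; I would handle this by the direct substitution above rather than by appealing to a normalization $\sum_i q_{m,i}=1$. The main obstacle, if any, is purely notational, namely keeping track of the interplay between the two variables when $g(y)$ (resp.\ $f(x)$) appears both inside the nodes and inside the basis. The observation that $Q_m^x$ acts in effect as the univariate $Q_m$ on the rescaled variable $t=x/g(y)$ with $y$ frozen cleanly sidesteps this difficulty.
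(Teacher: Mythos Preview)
Your interpolation argument is correct and essentially identical to the paper's: the paper separates the $i=0$ and $i=m$ summands from the rest, then evaluates at $x=0$ and $x=g(y)$ to see that exactly one term survives and equals $1$, just as you do. Note, however, that your second paragraph on the degree of exactness actually proves the \emph{next} result (Theorem~\ref{dex}), not Theorem~\ref{proprQ}, and is superfluous here.
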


\begin{enumerate}
\item[\textrm{(i)}] $Q_{m}^{x}F=F\ $on\ $\Gamma_{1}\cup \Gamma_{3},$

\item[\textrm{(ii)}] $Q_{n}^{y}F=F\ $on\ $\Gamma_{2}\cup \Gamma_{3}.$
\end{enumerate}

\begin{proof}
(i) We may write%
\begin{align}
(Q_{m}^{x}F)(x,y)=  &  \tfrac{1}{(1+m\beta)^{m-1}}\left \{  (1-\tfrac{x}%
{g(y)})[1-\tfrac{x}{g(y)}+m\beta]^{m-1}F\left(  0,y\right)  \right.
\label{chforma2}\\
&  +\tfrac{x}{g(y)}(1-\tfrac{x}{g(y)})%
{\textstyle \sum \limits_{i=1}^{m-1}}
{\tbinom{m}{i}}(\tfrac{x}{g(y)}+i\beta)^{i-1}\nonumber \\
&  \cdot \lbrack1-\tfrac{x}{g(y)}+(m-i)\beta]^{m-i-1}F\left(  i\tfrac{g(y)}%
{m},y\right) \nonumber \\
&  \left.  +\tfrac{x}{g(y)}(\tfrac{x}{g(y)}+m\beta)^{m-1}F\left(
g(y),y\right)  \right \}  .\nonumber
\end{align}
Considering (\ref{chforma2}), we may easily prove that
\begin{align*}
&  (Q_{m}^{x}F)(0,y) =F(0,y),\\
&  (Q_{m}^{x}F)(g(y),y) =F(g(y),y).
\end{align*}
(ii) Similarly, writing
\begin{align*}
(Q_{n}^{y}F)(x,y)=  &  \tfrac{1}{(1+nb)^{n-1}}\left \{  (1-\tfrac{y}%
{f(x)})[1-\tfrac{y}{f(x)}+nb]^{n-1}F\left(  x,0\right)  \right. \\
&  +\tfrac{y}{f(x)}(1-\tfrac{y}{f(x)})%
{\textstyle \sum \limits_{j=1}^{n-1}}
{\tbinom{n}{j}}(\tfrac{y}{f(x)}+jb)^{j-1}\\
&  \cdot \lbrack1-\tfrac{y}{f(x)}+(n-j)b]^{n-j-1}F\left(  x,j\tfrac{f(x)}%
{n}\right) \\
&  \left.  +\tfrac{y}{f(x)}(\tfrac{y}{f(x)}+nb)^{n-1}F\left(  x,f(x)\right)
\right \}  ,
\end{align*}
we get that
\begin{align*}
&  (Q_{n}^{y}F)(x,0)=F(x,0),\\
&  (Q_{n}^{y}F)(x,f(x))=F(x,f(x)).
\end{align*}

\end{proof}

\begin{theorem}
\label{dex}The operators $Q_{m}^{x}$ and $Q_{n}^{y}$ have the following orders
of accuracy:
\end{theorem}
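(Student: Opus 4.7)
The plan is to transfer the univariate degree-of-exactness information of Remark \ref{proprChShorig}(3) to the bivariate setting by noting that, for a fixed value of the parameter, each operator in (\ref{Chcurb}) is literally a rescaled copy of the univariate $Q_m$ from (\ref{Chorig}). Setting $u := x/g(y)$ for fixed $y$ with $g(y)>0$, the weights $q_{m,i}(x,y)$ coincide with the univariate weights $q_{m,i}(u)$, and defining $\phi_y(u):=F(u\,g(y),y)$ one obtains
\[
(Q_m^{x}F)(x,y) \;=\; \sum_{i=0}^{m}q_{m,i}(u)\,\phi_y\!\left(\tfrac{i}{m}\right) \;=\; (Q_m \phi_y)(u).
\]
The symmetric substitution $v:=y/f(x)$ with $\psi_x(v):=F(x,v\,f(x))$ gives $(Q_n^{y}F)(x,y)=(Q_n \psi_x)(v)$.

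Testing on the monomial basis $e_{pq}(x,y)=x^{p}y^{q}$, the reparameterisation yields $\phi_y(u)=g(y)^{p}y^{q}u^{p}$, so
\[
(Q_m^{x}e_{pq})(x,y) \;=\; g(y)^{p}\,y^{q}\,(Q_m e_p)(x/g(y)).
\]
By Remark \ref{proprChShorig}(3) the univariate operator satisfies $Q_m e_p = e_p$ for $p\in\{0,1\}$, hence $Q_m^{x}e_{pq}=e_{pq}$ whenever $p\le 1$ and $q\in\mathbb{N}$; in particular every bivariate polynomial of total degree at most one is reproduced. The same argument, with $v$, $f(x)$ and the roles of the variables interchanged, gives $Q_n^{y}e_{pq}=e_{pq}$ whenever $q\le 1$ and $p\in\mathbb{N}$.

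To confirm that these indices are sharp, I would substitute $p=2$ in the displayed formula and invoke (\ref{qe2}): the right-hand side becomes a nontrivial combination of the sums $S(2,m-2,\cdot,\cdot)$ and $S(2,m-3,\cdot,\cdot)$ from (\ref{S}), which already in the Bernstein specialisation $\beta=0$ reduces to $u^{2}+u(1-u)/m\neq u^{2}$. Thus $Q_m^{x}e_{2q}\neq e_{2q}$ and, by symmetry, $Q_n^{y}e_{p2}\neq e_{p2}$, so $\operatorname*{dex}(Q_m^{x})=\operatorname*{dex}(Q_n^{y})=1$. The only mildly delicate point is notational: one must carry the scaling factors $g(y)^{p}$ and $f(x)^{p}$ through the computation so that the nonreproduction of $e_{2q}$ traces back to the genuine univariate inequality $Q_m e_2\neq e_2$, rather than to an artefact at boundary points where $g$ or $f$ vanishes.
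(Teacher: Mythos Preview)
Your argument is correct and follows essentially the same route as the paper: both reduce the bivariate reproduction identities to the univariate fact $\operatorname*{dex}(Q_m)=1$ from Remark~\ref{proprChShorig}(3), by pulling the $y^{q}$ (resp.\ $x^{p}$) factor out of the sum and recognising what remains as a rescaled univariate Cheney--Sharma operator. Your version is simply more explicit about the change of variables $u=x/g(y)$ and adds a sharpness check via (\ref{qe2}) that the paper omits.
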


\begin{enumerate}
\item[(i)] $\left(  Q_{m}^{x}e_{ij}\right)  \left(  x,y\right)  =x^{i}%
y^{j},\  \ i=0,1;$ $j\in \mathbb{N};$

\item[(ii)] $\left(  Q_{n}^{y}e_{ij}\right)  \left(  x,y\right)  =x^{i}%
y^{j},\ $ $i\in \mathbb{N};$ $j=0,1,$ where $e_{ij}\left(  x,y\right)
=x^{i}y^{j},\ $ $i,j\in \mathbb{N.}$
\end{enumerate}

\begin{proof}
(i) We have
\[
(Q_{m}^{x}e_{ij})(x,y)=y^{j}%
{\textstyle \sum \limits_{i=0}^{m}}
{q}_{m,i}(x,y)[i\tfrac{g(y)}{m}]^{i},
\]
and having the degree of exactness of the univariate Cheney-Sharma operator
equal to $1$ (see Remark \ref{proprChShorig}), the result follows.

Property (ii) is proved in the same way.
\end{proof}

We consider the approximation formula%
\[
F=Q_{m}^{x}F+R_{m}^{x}F,
\]
where $R_{m}^{x}F$ denotes the approximation error.

\begin{theorem}
\label{Theorema 2.}If $F(\cdot,y)\in C[0,g(y)]$ then we have%
\begin{equation}
\big \vert \left(  R_{m}^{x}F\right)  (x,y)\big \vert \leq(1+\tfrac{1}{\delta
}\sqrt{A_{m}-x^{2}})\omega(F(\cdot,y);\delta),\  \  \  \  \forall \delta>0,
\label{est1}%
\end{equation}
where $\omega(F(\cdot,y);\delta)$ is the modulus of continuity and
$A_{m}=x(1+m\beta)^{1-m}[S(2,m-2,x+2\beta,1-x)-(m-2)\beta S(2,m-3,x+2\beta
,1-x+\beta)],$ with $S$ given in (\ref{S}).
\end{theorem}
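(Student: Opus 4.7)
The plan is to follow the classical Popoviciu--Shisha--Mond scheme for positive linear operators that reproduce the constants and the linear monomials, adapted to $Q_{m}^{x}$ acting in its first variable with $y$ treated as a parameter. Since Theorem \ref{dex}(i) gives $(Q_{m}^{x}e_{00})(x,y)=1$, i.e.\ $\sum_{i=0}^{m}q_{m,i}(x,y)=1$, and the basis functions $q_{m,i}(x,y)$ are nonnegative on $\widetilde{T}_{h}$, I would first rewrite the remainder as
\[
(R_{m}^{x}F)(x,y)=\sum_{i=0}^{m}q_{m,i}(x,y)\Bigl[F(x,y)-F\bigl(i\tfrac{g(y)}{m},y\bigr)\Bigr],
\]
and then apply the standard modulus-of-continuity estimate $|F(x,y)-F(t,y)|\le \bigl(1+|x-t|/\delta\bigr)\omega(F(\cdot,y);\delta)$ term by term to obtain
\[
\bigl|(R_{m}^{x}F)(x,y)\bigr|\le \omega(F(\cdot,y);\delta)\Bigl[1+\tfrac{1}{\delta}\sum_{i=0}^{m}q_{m,i}(x,y)\bigl|x-i\tfrac{g(y)}{m}\bigr|\Bigr].
\]

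The second step is the Cauchy--Schwarz inequality with the probability weights $q_{m,i}(x,y)$, which bounds the first-absolute-moment sum by the square root of the second-moment sum:
\[
\sum_{i=0}^{m}q_{m,i}(x,y)\bigl|x-i\tfrac{g(y)}{m}\bigr|\le \Bigl(\sum_{i=0}^{m}q_{m,i}(x,y)\bigl(x-i\tfrac{g(y)}{m}\bigr)^{2}\Bigr)^{1/2}.
\]
Expanding the square and using once more, by Theorem \ref{dex}(i), that $Q_{m}^{x}$ reproduces $e_{00}$ and $e_{10}$, the expression under the square root collapses to $(Q_{m}^{x}e_{20})(x,y)-x^{2}$. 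Combining these two inequalities yields (\ref{est1}) up to the identification of $A_{m}$ with $(Q_{m}^{x}e_{20})(x,y)$.

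The last step is the explicit evaluation of $(Q_{m}^{x}e_{20})(x,y)$. Since $q_{m,i}(x,y)$ depends on $(x,y)$ only through the ratio $u:=x/g(y)$, the sum $\sum_{i=0}^{m}q_{m,i}(x,y)(ig(y)/m)^{2}$ factors as $g(y)^{2}\,(Q_{m}e_{2})(u)$, for which the closed form (\ref{qe2}) stated in Remark \ref{proprChShorig}(4) in terms of the functional $S$ from (\ref{S}) applies directly; after substitution one obtains exactly the expression displayed as $A_{m}$ in the statement. There is no genuine conceptual obstacle here: the argument is a routine adaptation of the Popoviciu-type estimate for the Bernstein operator, and the only computational input beyond the linearity-preservation already proved in Theorem \ref{dex} is the univariate identity (\ref{qe2}), which has been recorded at the outset of the section precisely for this purpose.
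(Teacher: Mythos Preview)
Your proposal is correct and follows essentially the same route as the paper: the paper simply invokes the Shisha--Mond/Popoviciu-type estimate for positive linear operators with $\operatorname{dex}=1$ as a ready-made inequality (citing \cite{Agr00}) and then substitutes \eqref{qe2}, while you spell out that very inequality via the standard modulus-of-continuity bound plus Cauchy--Schwarz on the first absolute moment. The only difference is level of detail, not method.
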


\begin{proof}
By Theorem \ref{dex} we have that $\operatorname*{dex}(Q_{m}^{x})=1,$ thus we
may apply the following property of linear operators (see, for example,
\cite{Agr00}):
\[
\big \vert(Q_{m}^{x}F)(x,y)-F(x,y)\big \vert \leq \lbrack1+\delta^{-1}%
\sqrt{(Q_{m}^{x}e_{20})(x,y)-x^{2}}]\omega(F(\cdot,y);\delta),\  \  \forall
\delta>0,
\]
and taking into account (\ref{qe2}), we get (\ref{est1}).
\end{proof}

\begin{theorem}
\label{Theorema 3.}If $F(\cdot,y)\in C^{2}[0,g(y)]$ then
\begin{align}
(R_{m}^{x}F)(x,y)=  &  \tfrac{1}{2}F^{(2,0)}(\xi,y)\{x^{2}-x(1+m\beta
)^{1-m}[S(2,m-2,x+2\beta,1-x)\label{estRP1}\\
&  -(m-2)\beta S(2,m-3,x+2\beta,1-x+\beta)]\},\nonumber
\end{align}
for $\xi \in \lbrack0,g(y)]$ and $\beta>0.$
\end{theorem}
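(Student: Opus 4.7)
The plan is to derive (\ref{estRP1}) by a Taylor expansion in the $x$-variable combined with a positive-linear-functional mean-value argument; since Theorem \ref{dex} yields $\operatorname*{dex}(Q_m^x)=1$, this is the natural route. For fixed $y$ and any $t\in[0,g(y)]$, Taylor's formula applied to $F(\cdot,y)\in C^2[0,g(y)]$ gives
\[
F(t,y)=F(x,y)+F^{(1,0)}(x,y)(t-x)+\tfrac{1}{2}F^{(2,0)}(\eta(t),y)(t-x)^2,
\]
for some $\eta(t)$ between $t$ and $x$. I would apply $Q_m^x$ to this identity (acting on the first argument, with $y$ held fixed) and use the fact, from Theorem \ref{dex}, that $Q_m^x$ reproduces $e_{00}$ and $e_{10}$. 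The first two summands then collapse to $F(x,y)$ and $0$, leaving
\[
(Q_m^xF)(x,y)-F(x,y)=\tfrac{1}{2}\,Q_m^x\big[F^{(2,0)}(\eta(\cdot),y)(\cdot-x)^2\big](x,y).
\]

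The second step is a mean-value factorisation. Each coefficient $q_{m,i}(x,y)$ in (\ref{Chcurb}) is a product of nonnegative factors whenever $x/g(y)\in[0,1]$ and $\beta\geq 0$, so $Q_m^x$ acts as a positive linear functional on $C[0,g(y)]$ for each fixed $(x,y)\in\widetilde{T}_h$. Since $(t-x)^2\geq 0$ and $F^{(2,0)}(\cdot,y)$ is continuous on $[0,g(y)]$, the standard mean-value theorem for positive linear functionals produces $\xi\in[0,g(y)]$ with
\[
Q_m^x\big[F^{(2,0)}(\eta(\cdot),y)(\cdot-x)^2\big](x,y)=F^{(2,0)}(\xi,y)\cdot Q_m^x\big[(\cdot-x)^2\big](x,y).
\]

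To finish, I would expand the square and invoke once more the reproduction properties $Q_m^xe_{00}=1$ and $Q_m^xe_{10}=x$ to obtain $Q_m^x[(\cdot-x)^2](x,y)=(Q_m^xe_{20})(x,y)-x^2$; the value of $(Q_m^xe_{20})(x,y)$ is precisely the quantity $A_m$ identified in Theorem \ref{Theorema 2.} via (\ref{qe2}). Writing $R_m^xF=F-Q_m^xF$ then yields the sign $x^2-A_m$ in front of $\tfrac{1}{2}F^{(2,0)}(\xi,y)$, giving exactly (\ref{estRP1}). The most delicate step is the mean-value factorisation: one must verify nonnegativity of every $q_{m,i}(x,y)$ on $\widetilde{T}_h$ so that the positive-functional version of the intermediate-value theorem genuinely applies; all remaining work is routine algebra given Theorem \ref{dex} and the univariate identity (\ref{qe2}).
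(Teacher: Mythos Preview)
Your argument is correct, but it follows a different path from the paper's. The paper invokes Peano's theorem: from $\operatorname*{dex}(Q_m^x)=1$ one writes $(R_m^xF)(x,y)=\int_0^{g(y)}K_{20}(x,y;s)F^{(2,0)}(s,y)\,ds$ with the Peano kernel $K_{20}(x,y;s)=(x-s)_+-\sum_iq_{m,i}(x,y)(i\tfrac{g(y)}{m}-s)_+$, then spends the bulk of the proof verifying piecewise that $K_{20}\le 0$ on each subinterval $[(\nu-1)\tfrac{g(y)}{m},\nu\tfrac{g(y)}{m}]$, so that the integral mean-value theorem yields the factorisation. Your route bypasses the kernel analysis entirely: Taylor's formula with Lagrange remainder plus the nonnegativity of the discrete weights $q_{m,i}(x,y)(t_i-x)^2$ give the same factorisation via a finite weighted-average argument and the intermediate-value theorem for the continuous function $F^{(2,0)}(\cdot,y)$. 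The Peano-kernel approach is the classical one and makes the sign of the error transparent through the sign of the kernel; your approach is more elementary, avoids the case analysis on $s$, and relies only on positivity of the operator and $\operatorname*{dex}=1$. One small point worth tightening: the map $t\mapsto F^{(2,0)}(\eta(t),y)$ need not be continuous, so the phrase ``mean-value theorem for positive linear functionals'' is slightly loose; it is cleaner to argue directly at the discrete level that the convex combination $\sum_i w_iF^{(2,0)}(\eta_i,y)/\sum_i w_i$ lies between $\min$ and $\max$ of $F^{(2,0)}(\cdot,y)$ on $[0,g(y)]$ and hence is attained at some $\xi$.
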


\begin{proof}
Taking into account that $\operatorname*{dex}(Q_{m}^{x})=1,$ by Theorem
\ref{dex}, and applying the Peano's theorem (see, e.g., \cite{Sar63}), it
follows%
\[
(R_{m}^{x}F)(x,y)=\int_{0}^{g(y)}K_{20}(x,y;s)F^{(2,0)}(s,y)ds,
\]
where%
\[
K_{20}(x,y;s)=(x-s)_{+}-\sum_{i=0}^{m}q_{m,i}(x,y)\big(i\tfrac{g(y)}%
{m}-s\big)_{+}.
\]
For a given $\nu \in \{1,...,m\}$ one denotes by $K_{20}^{\nu}(x,y;\cdot)$ the
restriction of the kernel $K_{20}(x,y;\cdot)$ to the interval $\left[
(\nu-1)\frac{g(y)}{m},\nu \frac{g(y)}{m}\right]  ,$ i.e.,%
\[
K_{20}^{\nu}(x,y;\nu)=(x-s)_{+}-\sum_{i=\nu}^{m}q_{m,i}(x,y)\left(
i\tfrac{g(y)}{m}-s\right)  ,
\]
whence,%
\[
K_{20}^{\nu}(x,y;s)=\left \{
\begin{array}
[c]{ll}%
x-s-\sum \limits_{i=\nu}^{m}q_{m,i}(x,y)\left(  i\frac{g(y)}{m}-s\right)  , &
\ s<x\\[3mm]%
-\sum \limits_{i=\nu}^{m}q_{m,i}(x,y)\left(  i\frac{g(y)}{m}-s\right)  , &
\ s\geq x.
\end{array}
\right.
\]
It follows that $K_{20}^{\nu}(x,y;s)\leq0,\ $for $s\geq x.$

For $s<x$ we have%
\begin{align*}
K_{20}^{\nu}(x,y;s)=  &  x-s-\sum_{i=0}^{m}q_{m,i}(x,y)\left[  i\tfrac
{g(y)}{m}-s\right] \\
&  +\sum_{i=0}^{\nu-1}q_{m,i}(x,y)\left[  i\tfrac{g(y)}{m}-s\right]  .
\end{align*}
Applying Theorem \ref{dex}, we get%
\begin{align*}
\sum_{i=0}^{m}q_{m,i}(x,y)\left[  i\tfrac{g(y)}{m}-s\right]   &  =(Q_{m}%
^{x}e_{10})(x,y)-s(Q_{m}^{x}e_{00})(x,y)\\
&  =x-s,
\end{align*}
whence it follows that%
\[
K_{20}^{\nu}(x,y;s)=\sum_{i=0}^{\nu-1}q_{m,i}(x,y)\left[  i\tfrac{g(y)}%
{m}-s\right]  \leq0.
\]
So, $K_{20}^{\nu}(x,y;\cdot)\leq0,$ for any $\nu \in \{1,...,m\},$ i.e.,
$K_{20}(x,y;s)\leq0,$ for $s\in \lbrack0,g(y)].$

By the Mean Value Theorem, one obtains%
\[
(R_{m}^{x}F)(x,y)=F^{(2,0)}(\xi,y)\int_{0}^{g(y)}K_{20}(x,y;s)ds,\  \  \text{for
}0\leq \xi \leq g(y),
\]
with%
\[
\int_{0}^{g(y)}K_{20}(x,y;s)ds=\tfrac{1}{2}[x^{2}-(Q_{m}^{x}e_{20})(x,y)],
\]
and using (\ref{qe2}) we get (\ref{estRP1}).
\end{proof}

\begin{remark}
\label{Remark 2.} Analogous results with the ones in Theorems
\ref{Theorema 2.} and \ref{Theorema 3.} could be obtained for the remainder
$R_{n}^{y}F$ of the formula $F=Q_{n}^{y}F+R_{n}^{y}F.$
\end{remark}

\section{Product operators}

Let $P_{mn}^{1}=Q_{m}^{x}Q_{n}^{y},$ respectively, $P_{nm}^{2}=Q_{n}^{y}%
Q_{m}^{x}$ be the products of the operators $Q_{m}^{x}$ and $Q_{n}^{y}.$

We have%
\[
\left(  P_{mn}^{1}F\right)  \left(  x,y\right)  \!=\! \sum_{i=0}^{m}\sum
_{j=0}^{n}q_{m,i}\left(  x,y\right)  q_{n,j}\left(  i\tfrac{g(y)}{m},y\right)
F\Big(i\tfrac{g(y)}{m},j\tfrac{f(i\tfrac{g(y)}{m})}{n}\Big),
\]
respectively,%
\[
\left(  P_{nm}^{2}F\right)  \left(  x,y\right)  \!=\! \sum_{i=0}^{m}\sum
_{j=0}^{n}q_{m,i}\left(  x,j\tfrac{f(x)}{n}\right)  q_{n,j}\left(  x,y\right)
F\Big(i\tfrac{g(j\tfrac{f(x)}{n})}{m},j\tfrac{f(x)}{n}\Big).
\]

\begin{theorem}
\label{Theorema 4.} If $F$ is a real-valued function defined on $\widetilde
{T}_{h}$ then

\begin{enumerate}
\item[(i)] $(P_{mn}^{1}F)(V_{i})=F(V_{i}),\  \  \  \ i=1,...,3;$

$(P_{mn}^{1}F)(\Gamma_{3})=F(\Gamma_{3}),\ $

\item[(ii)] $(P_{nm}^{2}F)(V_{i})=F(V_{i}),\  \  \  \ i=1,...,3;$

$(P_{nm}^{2}F)(\Gamma_{3})=F(\Gamma_{3}),\ $
\end{enumerate}
\end{theorem}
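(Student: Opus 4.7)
The plan is to reduce Theorem \ref{Theorema 4.} to two applications of Theorem \ref{proprQ}, exploiting the composition structure $P_{mn}^{1}=Q_{m}^{x}Q_{n}^{y}$ (resp.\ $P_{nm}^{2}=Q_{n}^{y}Q_{m}^{x}$) together with the elementary observation that all three vertices and every point of $\Gamma_{3}$ belong simultaneously to $\Gamma_{1}\cup\Gamma_{3}$ and to $\Gamma_{2}\cup\Gamma_{3}$. No direct computation with the explicit formulas for the product operators should be needed.

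For part (i), I would first set $G:=Q_{n}^{y}F$ and invoke Theorem \ref{proprQ}(i) for the operator $Q_{m}^{x}$, giving $(P_{mn}^{1}F)(x,y)=(Q_{m}^{x}G)(x,y)=G(x,y)$ at every point $(x,y)\in\Gamma_{1}\cup\Gamma_{3}$. A second invocation, this time of Theorem \ref{proprQ}(ii), then collapses $G(x,y)=(Q_{n}^{y}F)(x,y)$ to $F(x,y)$ wherever additionally $(x,y)\in\Gamma_{2}\cup\Gamma_{3}$. Intersecting the two admissible regions yields
\[
(\Gamma_{1}\cup\Gamma_{3})\cap(\Gamma_{2}\cup\Gamma_{3})=\Gamma_{3}\cup(\Gamma_{1}\cap\Gamma_{2})=\Gamma_{3}\cup\{V_{3}\},
\]
since the two straight sides meet only at $V_{3}$ while $V_{1},V_{2}\in\Gamma_{3}$. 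This single identity simultaneously delivers $P_{mn}^{1}F=F$ on $\Gamma_{3}$ and $(P_{mn}^{1}F)(V_{i})=F(V_{i})$ for $i=1,2,3$.

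Part (ii) is then proved by the mirror argument: apply Theorem \ref{proprQ}(ii) to collapse the outer $Q_{n}^{y}$ acting on the function $Q_{m}^{x}F$ on $\Gamma_{2}\cup\Gamma_{3}$, then apply Theorem \ref{proprQ}(i) to collapse $Q_{m}^{x}F$ to $F$ on $\Gamma_{1}\cup\Gamma_{3}$, and intersect the regions exactly as in the previous display.

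Since Theorem \ref{proprQ} does all the analytic work, there is no genuine obstacle; the argument is essentially set-theoretic bookkeeping about the sides of $\widetilde{T}_{h}$. The only point deserving care is that at the vertices $V_{1}$ and $V_{2}$ the ratios $x/g(y)$ and $y/f(x)$ appearing in the definitions of $Q_{m}^{x}$ and $Q_{n}^{y}$ are indeterminate, so the values of these operators at those vertices must be interpreted via the boundary/limit convention already tacitly used in Theorem \ref{proprQ}. Once that convention is in force, the two-step collapse above is clean and yields both statements at once.
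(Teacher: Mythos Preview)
Your proposal is correct and follows essentially the same strategy as the paper: both arguments reduce the product-operator identities entirely to two applications of Theorem~\ref{proprQ}. The only cosmetic difference is that the paper first records, ``by a straightforward computation,'' the behaviour of $P_{mn}^{1}F$ and $P_{nm}^{2}F$ on each of the three sides separately (e.g.\ $(P_{mn}^{1}F)(x,0)=(Q_{m}^{x}F)(x,0)$, $(P_{mn}^{1}F)(0,y)=(Q_{n}^{y}F)(0,y)$, $(P_{mn}^{1}F)(x,f(x))=F(x,f(x))$) and then invokes Theorem~\ref{proprQ}, whereas you apply Theorem~\ref{proprQ} directly to the outer and inner factors and intersect $(\Gamma_{1}\cup\Gamma_{3})\cap(\Gamma_{2}\cup\Gamma_{3})$; your packaging is slightly more economical but not a genuinely different route.
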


\begin{proof}
By a straightforward computation, we get the following properties
\begin{align*}
&  (P_{mn}^{1}F)(x,0)=(Q_{m}^{x}F)(x,0),\\
&  (P_{mn}^{1}F)(0,y)=(Q_{n}^{y}F)(0,y),\\
&  (P_{mn}^{1}F)(x,f(x))=F(x,f(x)),\  \  \  \  \ x,y\in \lbrack0,h]
\end{align*}
and%
\begin{align*}
&  (P_{nm}^{2}F)(x,0)=(Q_{m}^{x}F)(x,0),\\
&  (P_{nm}^{2}F)(0,y)=(Q_{n}^{y}F)(0,y),\\
&  (P_{nm}^{2}F)(g(y),y)=F(g(y),y),\  \  \  \  \  \ x,y\in \lbrack0,h],
\end{align*}
and, taking into account Theorem \ref{proprQ}, they imply (i) and (ii).
\end{proof}

We consider the following approximation formula%
\[
F=P_{mn}^{1}F+R_{mn}^{^{P^{1}}}F,
\]
where $R_{mn}^{^{P^{1}}}$ is the corresponding remainder operator.

\begin{theorem}
\label{Theorem 5.} If $F\in C(\widetilde{T}_{h})$ then%
\begin{equation}
\left \vert (R_{mn}^{P^{1}}F)(x,y)\right \vert \leq(A_{m}+B_{n}-x^{2}%
-y^{2}+1)\omega(F;\tfrac{1}{\sqrt{A_{m}-x^{2}}},\tfrac{1}{\sqrt{B_{n}-y^{2}}%
}),\  \forall(x,y)\in \widetilde{T}_{h}, \label{restP1}%
\end{equation}
where
\begin{align}
A_{m}=  &  x(1+m\beta)^{1-m}[S(2,m-2,x+2\beta,1-x)\label{ab}\\
&  -(m-2)\beta S(2,m-3,x+2\beta,1-x+\beta)]\nonumber \\
B_{n}=  &  y(1+nb)^{1-n}[S(2,n-2,y+2b,1-y)-(n-2)bS(2,n-3,y+2b,1-y+\beta
)]\nonumber
\end{align}
and $\omega(F;\delta_{1},\delta_{2})$, with $\delta_{1}>0,$ $\delta_{2}>0,$ is
the bivariate modulus of continuity.
\end{theorem}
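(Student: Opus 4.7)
The plan is to recognize $P_{mn}^{1}=Q_{m}^{x}Q_{n}^{y}$ as a positive linear operator that reproduces constants, compute its two second central moments in terms of $A_{m}-x^{2}$ and $B_{n}-y^{2}$, and then apply a standard bivariate Shisha-Mond-type estimate, choosing the $\delta$'s so that the two square-root terms collapse.

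First I would note that $P_{mn}^{1}$ is positive (a composition of positive operators) and that $(P_{mn}^{1}e_{00})(x,y)=1$ by Theorem \ref{dex} applied with $i=j=0$ in succession to $Q_{n}^{y}$ and then $Q_{m}^{x}$. The central step is the computation of $(P_{mn}^{1}(\cdot-x)^{2})(x,y)$ and $(P_{mn}^{1}(\cdot-y)^{2})(x,y)$. Expanding $(s-x)^{2}=e_{20}-2x\,e_{10}+x^{2}e_{00}$ and exploiting that $Q_{n}^{y}$ leaves a function depending only on the first variable unchanged, I obtain $(P_{mn}^{1}e_{20})(x,y)=(Q_{m}^{x}e_{20})(x,y)=A_{m}$ (as computed in the proof of Theorem \ref{Theorema 3.}), together with $(P_{mn}^{1}e_{10})(x,y)=x$ and $(P_{mn}^{1}e_{00})(x,y)=1$ from Theorem \ref{dex}; hence $(P_{mn}^{1}(\cdot-x)^{2})(x,y)=A_{m}-x^{2}$. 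A symmetric argument, now exploiting that $Q_{m}^{x}$ acts trivially on functions of $y$ alone, yields $(P_{mn}^{1}(\cdot-y)^{2})(x,y)=B_{n}-y^{2}$.

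Next, I would apply the bivariate analogue of the Shisha-Mond estimate: for any positive linear operator $L$ with $L(1)=1$ and any $\delta_{1},\delta_{2}>0$,
\[
|(LF)(x,y)-F(x,y)|\le\Big(1+\delta_{1}^{-1}\sqrt{L((s-x)^{2})(x,y)}+\delta_{2}^{-1}\sqrt{L((t-y)^{2})(x,y)}\Big)\omega(F;\delta_{1},\delta_{2}).
\]
This follows from $|F(s,t)-F(x,y)|\le(1+\delta_{1}^{-1}|s-x|+\delta_{2}^{-1}|t-y|)\omega(F;\delta_{1},\delta_{2})$ after applying $L$ to both sides and using Cauchy--Schwarz, $L(|s-x|)\le\sqrt{L((s-x)^{2})}$. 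Specializing $L=P_{mn}^{1}$ and choosing $\delta_{1}=1/\sqrt{A_{m}-x^{2}}$, $\delta_{2}=1/\sqrt{B_{n}-y^{2}}$ causes each square-root factor to become $A_{m}-x^{2}$, respectively $B_{n}-y^{2}$, producing exactly the right-hand side of (\ref{restP1}).

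The main obstacle is the moment computation: one must carefully exploit that each univariate factor acts as the identity on functions of the other variable in order to reduce the bivariate second moments of $P_{mn}^{1}$ to the univariate second moments already recorded in Theorem \ref{Theorema 2.}. Once that reduction is in place, the remainder of the argument is a routine application of the bivariate modulus-of-continuity inequality with the natural choice of $\delta_{1}$ and $\delta_{2}$.
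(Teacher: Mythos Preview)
Your strategy coincides with the paper's: both use the bivariate Shisha--Mond inequality for the positive linear operator $P_{mn}^{1}$, bound $|s-x|$ and $|t-y|$ by square roots of second central moments via Cauchy--Schwarz, identify those moments with $A_{m}-x^{2}$ and $B_{n}-y^{2}$, and then choose $\delta_{1}=1/\sqrt{A_{m}-x^{2}}$, $\delta_{2}=1/\sqrt{B_{n}-y^{2}}$. The paper simply writes out the double sums explicitly instead of invoking the abstract operator estimate.

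There is, however, a gap in your ``symmetric argument'' for the $y$-moment. Because $P_{mn}^{1}=Q_{m}^{x}Q_{n}^{y}$, the operator $Q_{n}^{y}$ is applied \emph{first}; and $(Q_{n}^{y}e_{02})(x,y)=\sum_{j}q_{n,j}(x,y)\big(j\,f(x)/n\big)^{2}$ depends on $x$ through $f(x)$, so it is not a function of $y$ alone and $Q_{m}^{x}$ does not act trivially on it. Thus the reduction $(P_{mn}^{1}e_{02})(x,y)=(Q_{n}^{y}e_{02})(x,y)=B_{n}$ cannot be obtained by the symmetry you invoke (that argument would work for $P_{nm}^{2}=Q_{n}^{y}Q_{m}^{x}$, not for $P_{mn}^{1}$). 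The paper handles this step by asserting directly, at the level of the double sum, the bound
\[
\sum_{i,j}q_{m,i}(x,y)\,q_{n,j}\!\left(\tfrac{i}{m}g(y),y\right)\Big|y-\tfrac{j}{n}f\big(\tfrac{i}{m}g(y)\big)\Big|\le\sqrt{(Q_{n}^{y}e_{02})(x,y)-y^{2}},
\]
without appealing to a symmetry of the composition. If you want to repair your argument along the paper's lines, you should bound the inner $j$-sum for each fixed node $x_{i}=i\,g(y)/m$ and then control the resulting quantity uniformly in $i$, rather than claiming that $Q_{m}^{x}$ passes through.
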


\begin{proof}
Using a basic property of the modulus of continuity we have%
\begin{align*}
\left \vert (R_{mn}^{P^{1}}F)(x,y)\right \vert \leq &  \bigg[\tfrac{1}%
{\delta_{1}}\sum_{i=0}^{m}\sum_{j=0}^{n}q_{m,i}(x,y)q_{n,j}\left(  \tfrac
{i}{m}g(y),y\right)  \left \vert x-\tfrac{i}{m}g(y)\right \vert \\
&  +\tfrac{1}{\delta_{2}}\sum_{i=0}^{m}\sum_{j=0}^{n}q_{m,i}(x,y)q_{n,j}%
\left(  \tfrac{i}{m}g(y),y\right)  \left \vert y-\tfrac{j}{n}f\left(  \tfrac
{i}{m}g(y)\right)  \right \vert \\
&  +\sum_{i=0}^{m}\sum_{j=0}^{n}q_{m,i}(x,y)q_{n,j}\left(  \tfrac{i}%
{m}g(y),y\right)  \bigg]\omega(F;\delta_{1},\delta_{2}),\  \  \forall \delta
_{1},\delta_{2}>0.
\end{align*}
Since%
\begin{align*}
&  \sum_{i=0}^{m}\sum_{j=0}^{n}p_{m,i}(x,y)q_{n,j}\left(  \tfrac{i}%
{m}g(y),y\right)  \left \vert x-\tfrac{i}{m}g(y)\right \vert \leq \sqrt
{(Q_{m}^{x}e_{20})(x,y)-x^{2}},\\
&  \sum_{i=0}^{m}\sum_{j=0}^{n}p_{m,i}(x,y)q_{n,j}\left(  \tfrac{i}%
{m}g(y),y\right)  \left \vert y-\tfrac{j}{n}f\big(\tfrac{i}{m}%
g(y)\big)\right \vert \leq \sqrt{(Q_{n}^{y}e_{02})(x,y)-y^{2}},\\
&  \sum_{i=0}^{m}\sum_{j=0}^{n}p_{m,i}(x,y)q_{n,j}\left(  \tfrac{i}%
{m}g(y),y\right)  =1,
\end{align*}
applying (\ref{qe2}), we get
\begin{align*}
&  \left \vert (R_{mn}^{P^{1}}F)(x,y)\right \vert \leq \left \{  \tfrac{1}%
{\delta_{1}}[x(1+m\beta)^{1-m}]^{\frac{1}{2}} \right. \\
&  \cdot \left \{  \lbrack S(2,m-2,x+2\beta,1-x)-(m-2)\beta S(2,m-3,x+2\beta
,1-x+\beta)]-x^{2}\right \}  ^{\frac{1}{2}}\\
&  +\tfrac{1}{\delta_{2}}[y(1+nb)^{1-n}]^{\frac{1}{2}}\\
&  \cdot \left \{  \lbrack S(2,n-2,y+2b,1-y)-(n-2)bS(2,n-3,y+2b,1-y+\beta
)]-y^{2}\right \}  ^{\frac{1}{2}}\\
&  \left.  +1\right \}  \omega(F;\delta_{1},\delta_{2}).
\end{align*}
Denoting
\begin{align*}
A_{m}  &  =x(1+m\beta)^{1-m}[S(2,m-2,x+2\beta,1-x)-(m-2)\beta S(2,m-3,x+2\beta
,1-x+\beta)]\\
B_{n}  &  =y(1+nb)^{1-n}[S(2,n-2,y+2b,1-y)-(n-2)bS(2,n-3,y+2b,1-y+\beta)]
\end{align*}
and, taking $\delta_{1}=\frac{1}{\sqrt{A_{m}-x^{2}}}$ and $\delta_{2}=\frac
{1}{\sqrt{B_{n}-y^{2}}}$, we get (\ref{restP1}).
\end{proof}

\section{Boolean sum operators}

We consider the Boolean sums of the operators $Q_{m}^{x}$ and $Q_{n}^{y}$,%
\begin{align*}
S_{mn}^{1}  &  :=Q_{m}^{x}\oplus Q_{n}^{y}=Q_{m}^{x}+Q_{n}^{y}-Q_{m}^{x}%
Q_{n}^{y},\\
S_{nm}^{2}  &  :=Q_{n}^{y}\oplus Q_{m}^{x}=Q_{n}^{y}+Q_{m}^{x}-Q_{n}^{y}%
Q_{m}^{x}.
\end{align*}

\begin{theorem}
If $F$ is a real-valued function defined on $\widetilde{T}_{h},$ then%
\begin{align*}
&  S_{mn}^{1}F\left \vert _{\partial \widetilde{T}_{h}}=F\right \vert
_{\partial \widetilde{T}_{h}},\\
&  S_{mn}^{2}F\left \vert _{\partial \widetilde{T}_{h}}=F\right \vert
_{\partial \widetilde{T}_{h}}.
\end{align*}

\end{theorem}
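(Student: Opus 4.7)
The plan is to verify the interpolation on each of the three boundary pieces $\Gamma_1$, $\Gamma_2$, $\Gamma_3$ separately by expanding the definition $S_{mn}^{1}F=Q_{m}^{x}F+Q_{n}^{y}F-P_{mn}^{1}F$ and feeding in the boundary identities already established in Theorem \ref{proprQ} and Theorem \ref{Theorema 4.} (together with the intermediate boundary formulas for $P_{mn}^{1}F$ that appear in the proof of Theorem \ref{Theorema 4.}). Since everything reduces to algebraic cancellation of known quantities, there is no real obstacle: the argument is a careful bookkeeping of which of the three summands evaluates to $F$ and which two cancel.

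More concretely, on $\Gamma_1$ (the edge $x=0$) I would use that $(Q_{m}^{x}F)(0,y)=F(0,y)$ from Theorem \ref{proprQ}(i), combined with $(P_{mn}^{1}F)(0,y)=(Q_{n}^{y}F)(0,y)$, so that the $Q_{n}^{y}F$ terms cancel and $S_{mn}^{1}F(0,y)=F(0,y)$. On $\Gamma_2$ (the edge $y=0$) I would symmetrically invoke $(Q_{n}^{y}F)(x,0)=F(x,0)$ from Theorem \ref{proprQ}(ii) and $(P_{mn}^{1}F)(x,0)=(Q_{m}^{x}F)(x,0)$, so the $Q_{m}^{x}F$ contributions cancel. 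On $\Gamma_3$ (points of the form $(x,f(x))$, equivalently $(g(y),y)$) all three operators are already known to reproduce $F$: Theorem \ref{proprQ}(i) gives $(Q_{m}^{x}F)(g(y),y)=F(g(y),y)$, Theorem \ref{proprQ}(ii) gives $(Q_{n}^{y}F)(x,f(x))=F(x,f(x))$, and Theorem \ref{Theorema 4.}(i) gives $(P_{mn}^{1}F)(\Gamma_{3})=F(\Gamma_{3})$, so the sum collapses to $F+F-F=F$.

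For the second Boolean sum I would repeat the same three-step verification, now using $S_{nm}^{2}=Q_{n}^{y}+Q_{m}^{x}-P_{nm}^{2}$ together with the symmetric identities collected in the proof of Theorem \ref{Theorema 4.}, namely $(P_{nm}^{2}F)(x,0)=(Q_{m}^{x}F)(x,0)$, $(P_{nm}^{2}F)(0,y)=(Q_{n}^{y}F)(0,y)$, and $(P_{nm}^{2}F)(\Gamma_{3})=F(\Gamma_{3})$ from Theorem \ref{Theorema 4.}(ii). The cancellation pattern is identical, yielding $S_{nm}^{2}F|_{\partial\widetilde{T}_{h}}=F|_{\partial\widetilde{T}_{h}}$. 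Since the three edges cover $\partial\widetilde{T}_{h}$ and the three vertices lie on their pairwise intersections (where the three identities are mutually consistent), the conclusion follows with no further computation.
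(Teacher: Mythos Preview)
Your proposal is correct and follows essentially the same approach as the paper: expand the Boolean sum as $Q_{m}^{x}F+Q_{n}^{y}F-P_{mn}^{1}F$ (respectively with $P_{nm}^{2}$), then invoke the boundary identities of Theorem \ref{proprQ} together with the intermediate formulas from the proof of Theorem \ref{Theorema 4.} to force the required cancellations on each edge. Your write-up is in fact more explicit and systematic than the paper's terse proof, which lists only a subset of the identities and then appeals to Theorem \ref{proprQ} for the rest.
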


\begin{proof}
We have%
\begin{align*}
\left(  Q_{m}^{x}Q_{n}^{y}F\right)  \left(  x,0\right)  =  &  \left(
Q_{m}^{x}F\right)  \left(  x,0\right)  {,}\\
\left(  Q_{n}^{y}Q_{m}^{x}F\right)  \left(  0,y\right)  =  &  \left(
Q_{n}^{y}F\right)  \left(  0,y\right)  {,}\\
\left(  Q_{m}^{x}F\right)  \left(  x,h-x\right)  =  &  \left(  Q_{n}%
^{y}F\right)  (x,h-x)\\
&  =(P_{mn}^{1}F)(x,h-x)=(P_{nm}^{2}F)(x,h-x)=F(x,h-x),
\end{align*}
and, taking into account Theorem \ref{proprQ}, the conclusion follows.
\end{proof}

We consider the following approximation formula%
\[
F=S_{mn}^{1}F+R_{mn}^{^{S^{1}}}F,
\]
where $R_{mn}^{^{S^{1}}}$ is the corresponding remainder operator.

\begin{theorem}
If $F\in C(\widetilde{T}_{h})$ then%
\begin{align}
&  \big \vert(R_{mn}^{^{S^{1}}}F)(x,y)\big \vert \leq \label{estS}\\
&  \leq(1+A_{m}-x^{2})\omega(F(\cdot,y);\tfrac{1}{\sqrt{A_{m}-x^{2}}%
})+(1+B_{n}-y^{2})\omega(F(x,\cdot);\tfrac{1}{\sqrt{B_{n}-y^{2}}})\nonumber \\
&  +(A_{m}+B_{n}-x^{2}-y^{2}+1)\omega(F;\tfrac{1}{\sqrt{A_{m}-x^{2}}}%
,\tfrac{1}{\sqrt{B_{n}-y^{2}}}),\nonumber
\end{align}
with $A_{m}$ and $B_{n}$ given in (\ref{ab}).
\end{theorem}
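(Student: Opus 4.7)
The plan is to reduce the estimate for the Boolean sum remainder to the three univariate/product remainder estimates already obtained in the paper. The key algebraic identity is the decomposition
\[
R_{mn}^{S^{1}}F = F - Q_{m}^{x}F - Q_{n}^{y}F + Q_{m}^{x}Q_{n}^{y}F = R_{m}^{x}F + R_{n}^{y}F - R_{mn}^{P^{1}}F,
\]
which follows from $S_{mn}^{1}=Q_{m}^{x}+Q_{n}^{y}-Q_{m}^{x}Q_{n}^{y}$ by grouping the identity operator into the three remainders $R_{m}^{x}F=F-Q_{m}^{x}F$, $R_{n}^{y}F=F-Q_{n}^{y}F$ and $R_{mn}^{P^{1}}F=F-Q_{m}^{x}Q_{n}^{y}F$. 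Applying the triangle inequality then reduces the proof to bounding each of the three terms on the right.

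Next, I would plug in the estimates that are already available. For the first term, Theorem \ref{Theorema 2.} gives, for every $\delta_{1}>0$,
\[
\bigl|(R_{m}^{x}F)(x,y)\bigr|\leq \bigl(1+\delta_{1}^{-1}\sqrt{A_{m}-x^{2}}\bigr)\omega\bigl(F(\cdot,y);\delta_{1}\bigr);
\]
choosing $\delta_{1}=1/\sqrt{A_{m}-x^{2}}$ yields precisely the first summand in \eqref{estS}. For the second term, Remark \ref{Remark 2.} provides the analogue for $R_{n}^{y}F$ (with $B_{n}$ playing the role of $A_{m}$), and the choice $\delta_{2}=1/\sqrt{B_{n}-y^{2}}$ delivers the second summand. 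The third summand comes straight out of Theorem \ref{Theorem 5.}, since the $\delta_{1},\delta_{2}$ in its statement have already been set to exactly these values when writing \eqref{restP1}.

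Putting the three pieces together and summing the resulting bounds gives the inequality \eqref{estS} verbatim, so no additional estimation of $(Q_{m}^{x}e_{20})$ or $(Q_{n}^{y}e_{02})$ is needed beyond what was done for Theorems \ref{Theorema 2.} and \ref{Theorem 5.}. There is no real obstacle: the only things to verify carefully are that the algebraic decomposition $R_{mn}^{S^{1}}=R_{m}^{x}+R_{n}^{y}-R_{mn}^{P^{1}}$ is correct (it is, by direct expansion) and that the $\delta$-choices are consistent across the three estimates so that the final bound takes the compact form stated. If anything requires attention, it is making sure that the $y$-analogue of Theorem \ref{Theorema 2.} is indeed the one promised in Remark \ref{Remark 2.}, i.e.\ that $B_{n}$ appears in place of $A_{m}$ with the univariate modulus taken in the second variable; once that is in place the proof is a one-line application of the triangle inequality followed by substitution.
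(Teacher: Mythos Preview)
Your proposal is correct and matches the paper's own proof essentially line for line: the paper uses the same decomposition $F-S_{mn}^{1}F=(F-Q_{m}^{x}F)+(F-Q_{n}^{y}F)-(F-P_{mn}^{1}F)$, applies the triangle inequality, and then invokes Theorems~\ref{Theorema 2.} and~\ref{Theorem 5.} (together with the $y$-analogue from Remark~\ref{Remark 2.}). If anything, you have spelled out the $\delta$-choices more explicitly than the paper does.
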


\begin{proof}
The identity
\[
F-S_{mn}^{1}F=(F-Q_{m}^{x}F)+(F-Q_{n}^{y}F)-(F-P_{mn}^{1}F)
\]
implies that
\[
\big \vert(R_{mn}^{^{S^{1}}}F)(x,y)\big \vert \leq \big \vert(R_{m}%
^{x}F)(x,y)\big \vert+\big \vert(R_{n}^{y}F)(x,y)\big \vert+\big \vert(R_{mn}%
^{P^{1}}F)(x,y)\big \vert,
\]
and, applying Theorems \ref{Theorema 2.} and \ref{Theorem 5.}, we get
(\ref{estS}).
\end{proof}

\section{Numerical examples}

We consider the function:%
\[%
\begin{array}
[c]{ll}%
\text{Gentle:} & F(x,y)=\frac{1}{3}\exp[-\tfrac{81}{16}\left(  (x-0.5)^{2}%
+(y-0.5)^{2}\right)  ],
\end{array}
\]
generally used in the literature, (see, e.g., \cite{RenCli84}). In Figure 2 we
plot the graphs of $F,$ $Q_{m}^{x}F,$ $Q_{n}^{y}F,$ $P_{mn}^{1}F,$ $S_{mn}%
^{1}F$, on $\tilde{T}_{h},$ considering $h=1,m=5,$ $n=6$, $\beta=1$ and we can
see the good approximation properties.\newpage%
\begin{center}
\includegraphics[
height=1.3102in,
width=1.7288in
]%
{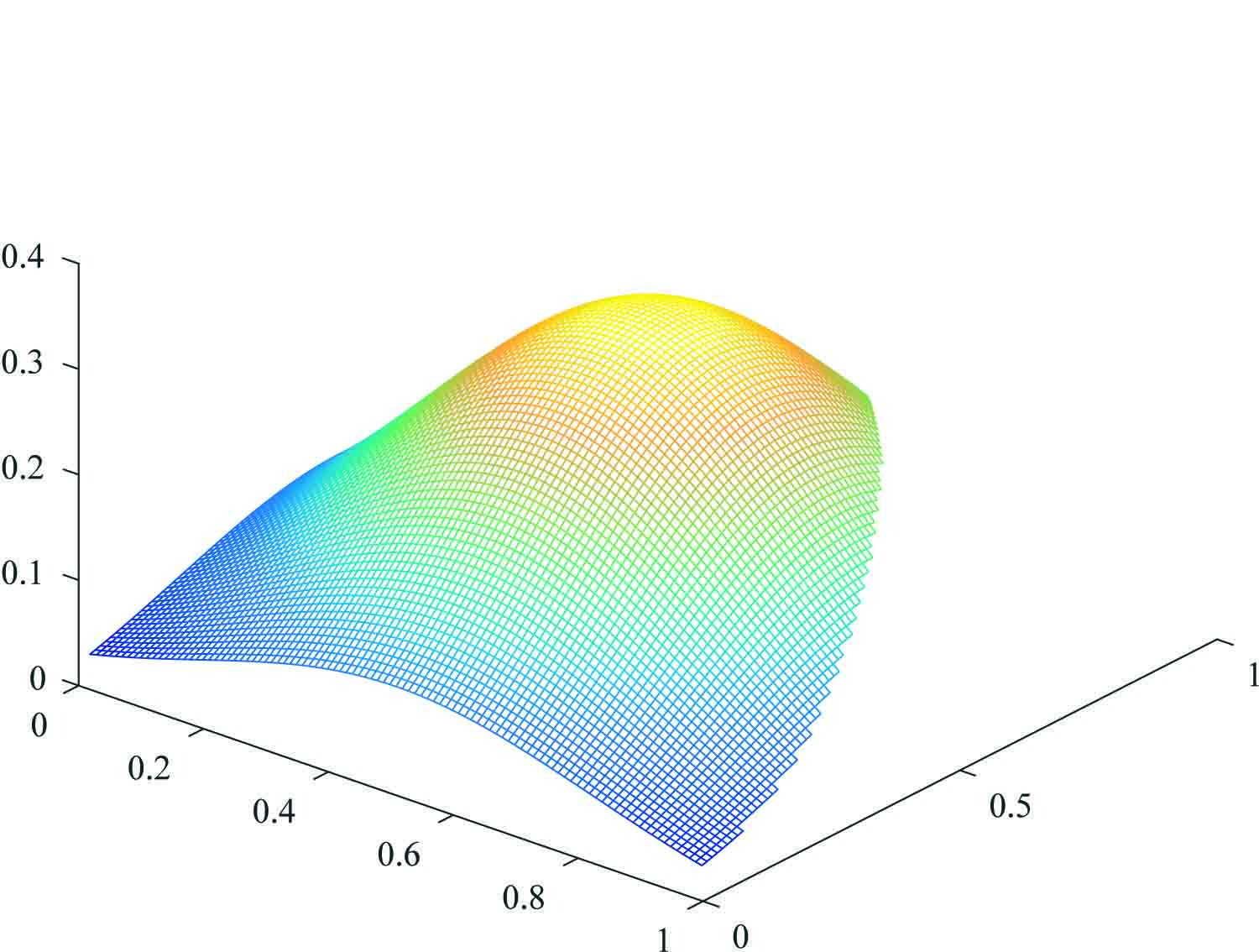}%
\\
$F$%
\end{center}
\  \  \  \  \  \  \  \  \  \  \  \  \  \ %

{\parbox[b]{1.7365in}{\begin{center}
\includegraphics[
height=1.2652in,
width=1.7365in
]%
{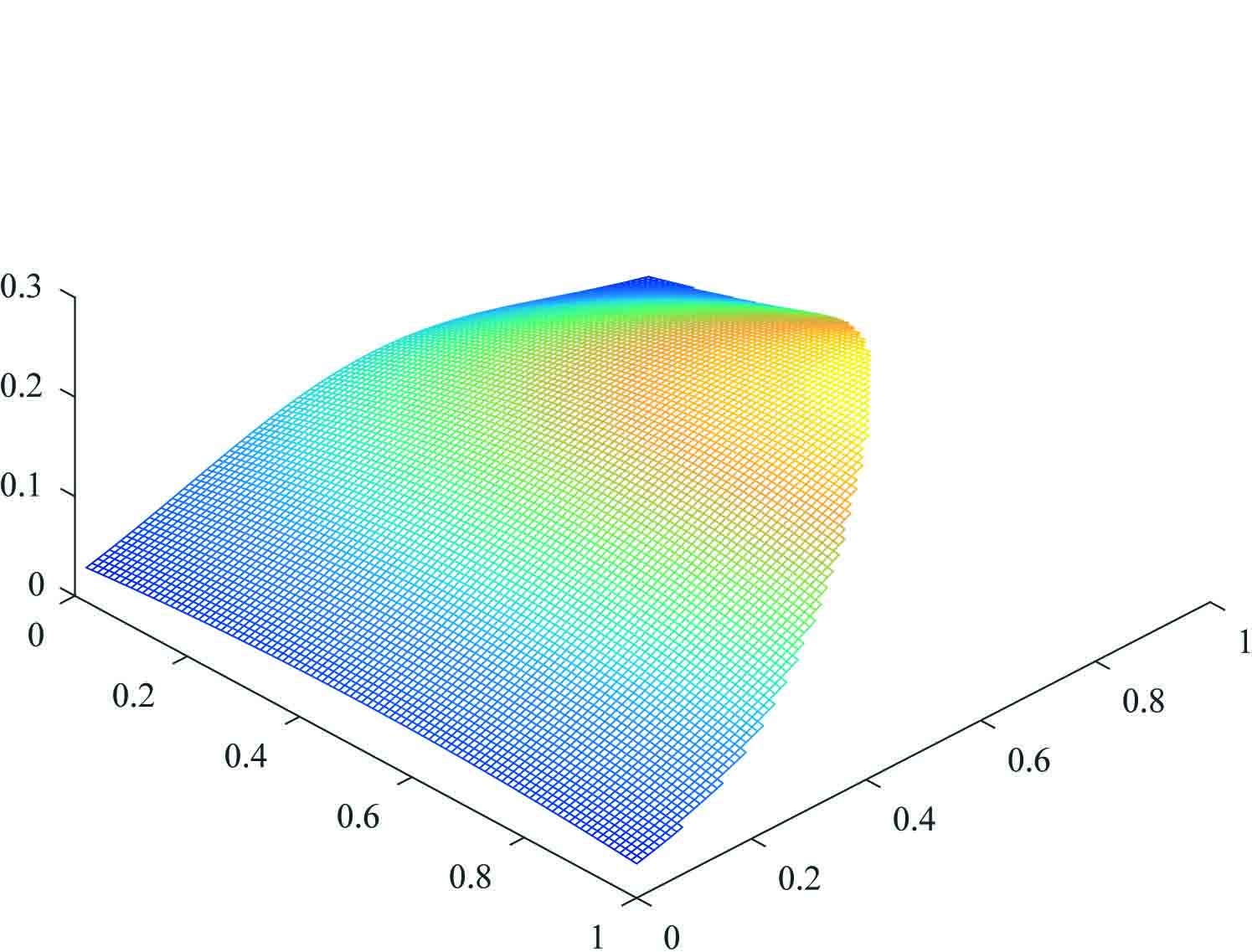}%
\\
$Q_{m}^{x}F$%
\end{center}}}
\  \  \  \  \  \  \  \
{\parbox[b]{1.6795in}{\begin{center}
\includegraphics[
height=1.2531in,
width=1.6795in
]%
{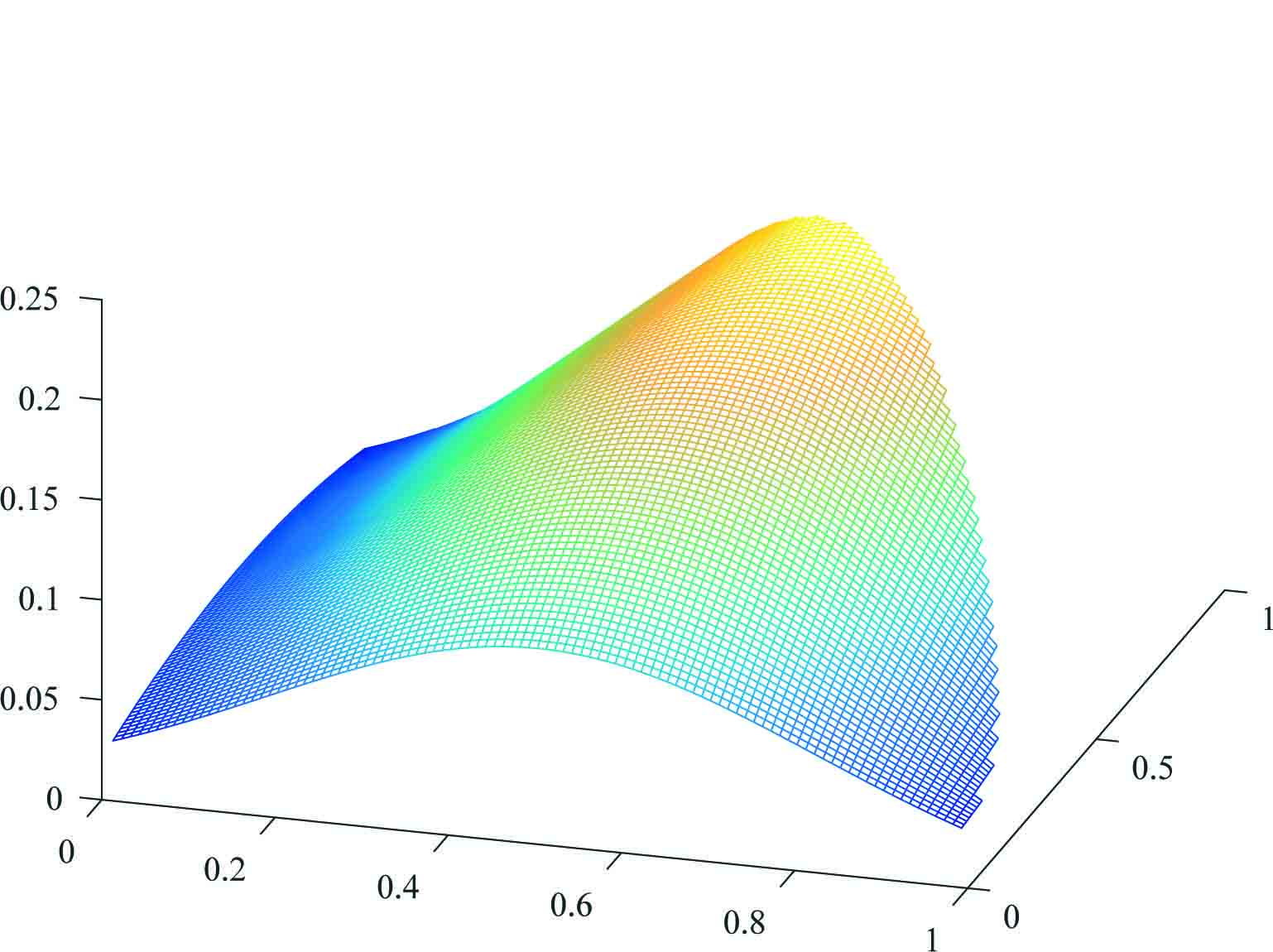}%
\\
$Q_{n}^{y}F$%
\end{center}}}
%

{\parbox[b]{1.817in}{\begin{center}
\includegraphics[
height=1.3612in,
width=1.817in
]%
{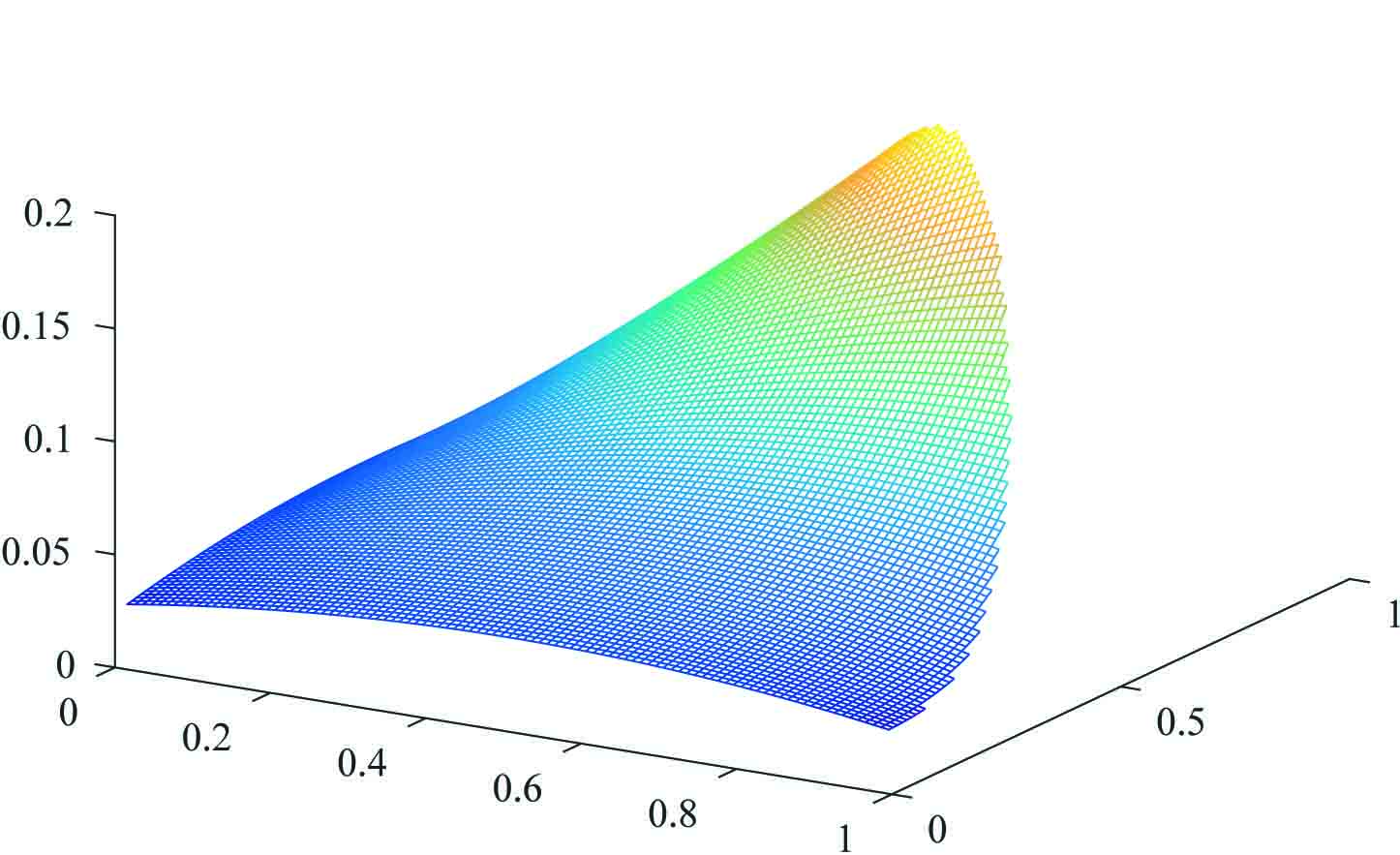}%
\\
$P_{mn}^{1}F$%
\end{center}}}
\  \  \  \  \  \  \  \
{\parbox[b]{1.8273in}{\begin{center}
\includegraphics[
height=1.3612in,
width=1.8273in
]%
{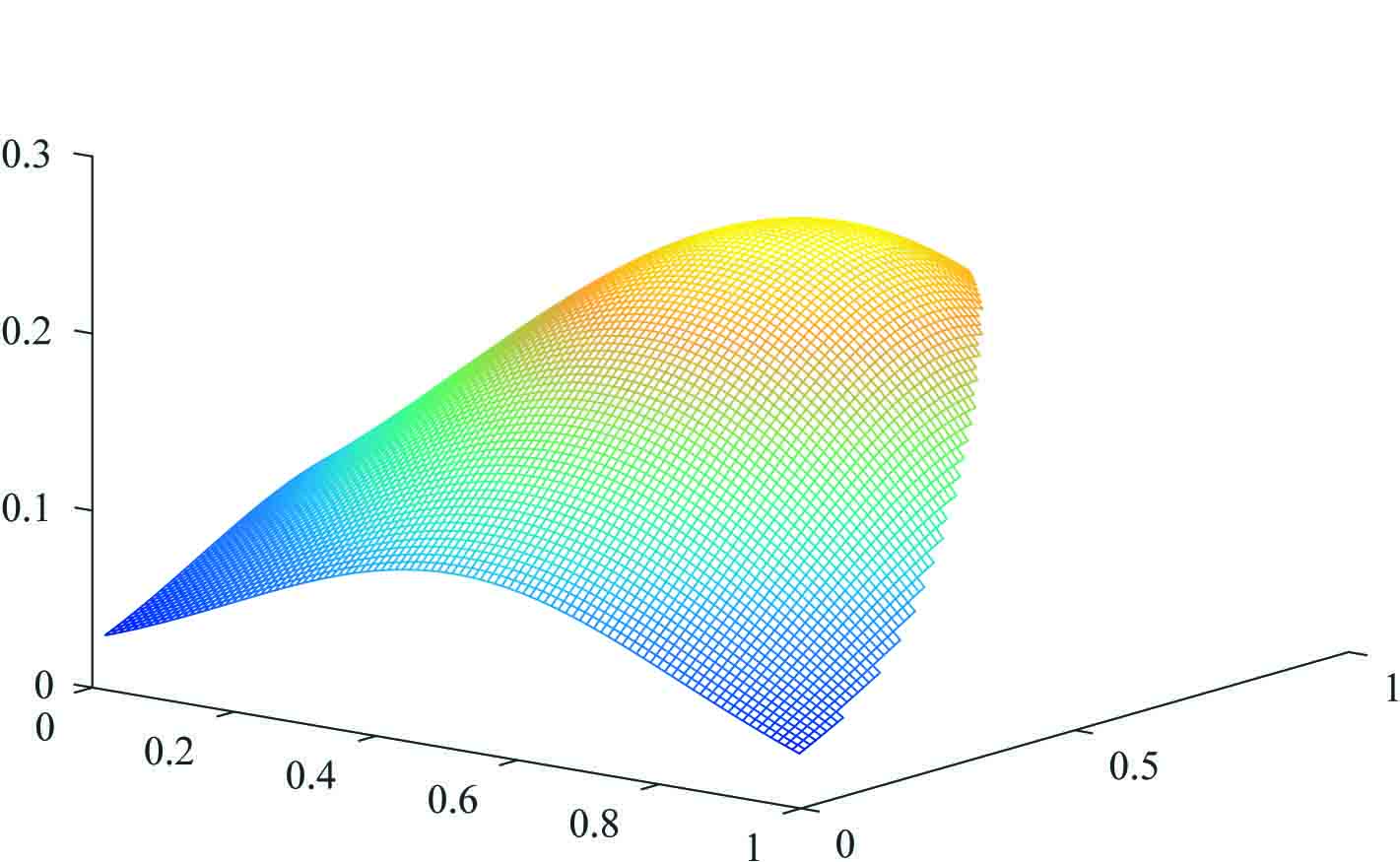}%
\\
$S_{mn}^{1}F$%
\end{center}}}

\  \  \  \  \  \  \  \  \  \  \  \  \  \  \  \  \  \  \quad Figure 2: The Cheney-Sharma
approximants for $\tilde{T}_{h}.$

\end{document}